\documentclass[a4paper]{article}

\usepackage[utf8]{inputenc} 
\usepackage[T1]{fontenc} 
\usepackage{amsmath,amsthm,latexsym} 
\usepackage[fixamsmath]{mathtools}
\mathtoolsset{showmanualtags,mathic,centercolon} 
\usepackage{amssymb,amsfonts} 
\usepackage{booktabs} 
\usepackage{enumitem}
\usepackage[UKenglish]{babel} 
\usepackage[svgnames]{xcolor} 
\usepackage{algpseudocode} 
\usepackage{algorithm} 
\usepackage{bm} 
\usepackage{url}
\usepackage{enumitem}

\usepackage[margin=1.25in]{geometry} 
\usepackage[labelfont=sl,textfont=sl]{subcaption} 
\usepackage[font={small,sl},labelsep=period]{caption} 

\numberwithin{equation}{section} 
\theoremstyle{plain}
\newtheorem{theorem}{Theorem}[section] 
\newtheorem{corollary}[theorem]{Corollary}

\theoremstyle{definition}

\newtheorem{assumption}[theorem]{Assumption}

\theoremstyle{remark}
\newtheorem{remark}[theorem]{Remark}

\newcommand{\thmref}[1]{Theorem~\ref{#1}}

\newcommand{\assref}[1]{Assumption~\ref{#1}}

\newcommand{\remref}[1]{Remark~\ref{#1}}

\newcommand{\R}{\mathbb{R}} 
\newcommand{\bigO}{\mathcal{O}} 
\newcommand{\grad}{\nabla} 

\renewcommand{\b}[1]{\bm{#1}} 

\newcommand{\bx}{\b{x}}

\newcommand{\bs}{\b{s}}
\newcommand{\bv}{\b{v}}

\newcommand{\bg}{\b{g}}

\newcommand{\kappaef}{\kappa_{\textnormal{ef}}}
\newcommand{\kappaeg}{\kappa_{\textnormal{eg}}}

\newcommand{\Prob}[1]{\mathbb{P}\left[#1\right]} 

\algrenewcommand\algorithmicrequire{\textbf{Input:}}
\algrenewcommand\algorithmicensure{\textbf{Output:}}

\begin{document}
\title{A note on the complexity of random subspace model-based methods for derivative-free optimization}
\author{
Coralia Cartis\thanks{Mathematical Institute, University of Oxford, Radcliffe Observatory Quarter, Woodstock Road, Oxford, OX2 6GG, United Kingdom (\texttt{cartis@maths.ox.ac.uk}).}
\and 
Lindon Roberts\thanks{School of Mathematics and Statistics \& ARC Training Centre in Optimisation Technologies, Integrated Methodologies, and Applications (OPTIMA), University of Melbourne, Parkville VIC 3010, Australia. ORCID 0000-0001-6438-9703 (\texttt{lindon.roberts@unimelb.edu.au}). 
This author was supported by the Australian Research Council Discovery Early Career Award DE240100006.}}

\date{\today}
\maketitle

\begin{abstract}
	We demonstrate that, with a suitable rescaling, using Johnson--Lindenstrauss transforms (JLTs) in the random subspace model-based derivative-free optimization (DFO) algorithm from [Cartis \& Roberts, \emph{Math.~Prog.} 199 (2023)] achieves an improved worst-case evaluation complexity bound compared to `unscaled' JLTs.
    This improved bound does not require any modification to the original algorithm or complexity analysis, and matches the best-known bound in terms of dimension dependence for model-based DFO (e.g.~achieved by [Scheinberg \& Chaudhry, ICM 2026] using Haar matrices in a similar framework).
\end{abstract}

\textbf{Keywords:} derivative-free optimization, large-scale optimization, randomized subspaces, worst case complexity.
\\

\textbf{Mathematics Subject Classification:} 65K05, 90C30, 90C56 

\section{Introduction}
In our previous work \cite{Cartis2023}, we introduced a model-based derivative-free optimization (MBDFO) algorithm that takes steps in randomly generated low-dimensional subspaces at each iteration.
At each iteration, once a subspace has been selected, a quadratic local model for the objective is formed by interpolation to well-chosen points inside the subspace, and this model is minimized (again inside the subspace) to select a new iterate, which is accepted or rejected following a trust-region approach.
In \cite{Cartis2023}, we provided a (high probability) worst-case complexity analysis of this algorithm, and, provided the random subspaces are chosen appropriately, showed improved bounds---in terms of explicit dimension dependence---over full-space methods.
This work requires the subspace be constructed using Johnson--Lindenstrauss transforms (JLTs) such as random Gaussian matrices.

In recent work, Scheinberg \& Chaudhry \cite{Scheinberg2026} provided a detailed complexity analysis of MBDFO methods, with a particular focus on dimension dependence.
For full-space methods applied to an $n$-dimensional problem, they showed an evaluation complexity of $\bigO(n^2 \epsilon^{-2})$ to achieve first-order stationarity level $\epsilon$ for methods based on full resampling of interpolation points at every iteration and a more practical geometry-correcting algorithm \cite{Scheinberg2010}.
If full resampling is based on a dimension-dependent radius, they show that this can be improved to $\bigO(n\epsilon^{-2})$ evaluations, matching the best-known MBDFO complexity bounds, e.g.~\cite{Grapiglia2023}, although this algorithm is more sensitive to errors (e.g.~rounding errors) in the objective value.

Scheinberg \& Chaudhry also consider a random subspace method based on the Haar ensemble (i.e.~random matrices with orthonormal columns) with full resampling of interpolation points, and show an improved $\bigO(pn\epsilon^{-2})$ evaluation complexity (in expectation), where $p=\bigO(1)$ is the subspace dimension.
This $\bigO(n \epsilon^{-2})$ dependency matches random subspace direct search DFO methods \cite{Roberts2023}, where deterministic direct search methods cannot achieve complexity better than $\bigO(n^2)$ \cite{Dodangeh2016}.

Our results from \cite{Cartis2023}, which are also based on full resampling of interpolation points, proved a worse evaluation complexity of $\bigO(pn^2 \epsilon^{-2})$ (with high probability), where $p=\bigO(1)$ for subspaces generated by JLTs.
In this short note, we show that our previous results can recover the same improved $\bigO(pn\epsilon^{-2})$ evaluation complexity  with no change to the algorithm or complexity analysis; instead, only a simple rescaling of the JLT matrix is required.

\section{Complexity Analysis of a Subspace MBDFO Method}
The MBDFO algorithm from \cite{Cartis2023} uses trust-region methods in the style of \cite{Conn2009,Roberts2026} to solve the unconstrained problem
\begin{align}
    \min_{\bx\in\R^n} f(\bx),
\end{align}
where $f:\R^n\to\R$ is nonconvex with Lipschitz continuous gradient, although the algorithm does not assume access to $\grad f$.
Instead, at each iteration we draw a random $Q_k\in\R^{n\times p}$ which defines a $p$-dimensional subspace (for $p\ll n$) for our next tentative trust-region step.
We build a low-dimensional quadratic model
\begin{align}
    f(\bx_k + Q_k \hat{\bs}) \approx \hat{m}_k(\hat{\bs}) := f(\bx_k) + \hat{\bg}_k^T \hat{\bs} + \frac{1}{2} \hat{\bs}^T \hat{H}_k \hat{\bs},
\end{align}
via interpolation to points close to $\bx_k$, such that the model is $Q_k$-fully linear\footnote{The full algorithm in \cite{Cartis2023} allows some iterations where the model is not $Q_k$-fully linear, but we make this assumption to align with \cite{Scheinberg2026} and to simplify the exposition.} in the current trust region, $B(\bx_k,\Delta_k)$ \cite[Definition 1]{Cartis2023}:
\begin{align}
    |f(\bx_k+Q_k\hat{\bs}) - \hat{m}_k(\hat{\bs})| \leq \kappaef \Delta_k^2, \qquad \text{and} \qquad \|Q_k^T \grad f(\bx_k+Q_k\hat{\bs}) - \hat{m}_k(\hat{\bs})\| \leq \kappaeg \Delta_k,
\end{align}
for all $\hat{\bs}\in\R^p$ with $\|\hat{\bs}\|\leq \Delta_k$.
The full algorithm \cite[Algorithm 1]{Cartis2023} then follows a standard (trust-region) MBDFO approach. 

For convergence, we require standard assumptions for MBDFO methods---see \cite[Assumptions 1--3]{Cartis2023}---and the following assumption on $Q_k$:

\begin{assumption}[Assumption 4, \cite{Cartis2023}] \label{ass_well_aligned}
    The procedure for generating each $Q_k$ satisfies:
    \begin{enumerate}[label=(\alph*)]
        \item There exists $\alpha_Q>0$ such that, for all $k$, $Q_k$ is \emph{well-aligned} (i.e.~$\|Q_k^T \grad f(\bx_k)\| \geq \alpha_Q \|\grad f(\bx_k)\|$) with probability at least $1-\delta_S$ for some $\delta_S\in(0,1)$, independently of $\{Q_0,\ldots,Q_{k-1}\}$; and
        \item There exists $Q_{\max}>0$ such that $\|Q_k\| \leq Q_{\max}$ for all $k$.
    \end{enumerate}
\end{assumption}

The worst-case complexity of \cite[Algorithm 1]{Cartis2023} is given by the following high probability result.

\begin{theorem} \label{thm_complexity}
    Suppose \cite[Assumptions 1--3]{Cartis2023} and \assref{ass_well_aligned} hold.
    Under suitable assumptions on the trust-region radius updating parameters, if $K_{\epsilon} := \min\{ k : \|\grad f(\bx_k)\| \leq \epsilon\}$ then
    \begin{align}
        \Prob{K_{\epsilon} \leq \bigO(\kappa_H (\kappa_d+1)^2 \alpha_Q^{-2} \epsilon^{-2})} \geq 1 - e^{-\bigO(\kappa_H (\kappa_d+1)^2 \alpha_Q^{-2} \epsilon^{-2})},
    \end{align}
    where $\kappa_d := \max(\kappaef,\kappaeg)$ and $\kappa_H \geq 1$ is a uniform upper bound on the model Hessians, $\|\hat{H}_k\|\leq \kappa_H$.
\end{theorem}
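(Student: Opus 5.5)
This is the main result of \cite{Cartis2023}, so the plan is to retrace its structure and track the constants explicitly. The argument splits iterations into four classes using two binary labels. The first label is whether $Q_k$ is well-aligned. The second is whether the iteration is successful ($\rho_k\geq\eta_1$) or unsuccessful, and whether $\Delta_k$ lies above or below a threshold $\Delta_{\min}(\epsilon)\sim c\,\alpha_Q\epsilon/(\kappa_H(\kappa_d+1))$.

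\textbf{Step 1 (deterministic trust-region lemmas).} Assume $k<K_\epsilon$, so $\|\grad f(\bx_k)\|>\epsilon$. If $Q_k$ is well-aligned, then $\|Q_k^T\grad f(\bx_k)\|\geq\alpha_Q\epsilon$, and $Q_k$-full linearity gives $\|\hat{\bg}_k\|\geq\alpha_Q\epsilon-\kappaeg\Delta_k$. Combining this with the Cauchy decrease $\hat m_k(0)-\hat m_k(\hat\bs_k)\geq \tfrac12\|\hat\bg_k\|\min(\Delta_k,\|\hat\bg_k\|/\kappa_H)$ and the model error bound $\kappaef\Delta_k^2$, I will show the following. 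Every well-aligned iteration with $\Delta_k\leq\Delta_{\min}$ is successful, with $\Delta_{\min}=\bigO(\alpha_Q\epsilon/(\kappa_H(\kappa_d+1)))$. Every successful iteration with $\Delta_k\geq\Delta_{\min}$ decreases $f$ by at least $c\,\alpha_Q\epsilon\,\Delta_{\min}$, using that successful steps have $\|\hat\bg_k\|\gtrsim \Delta_k$ through the criticality-type safeguard of the algorithm.

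\textbf{Step 2 (counting).} Since $f$ is bounded below, the number of successful iterations with $\Delta_k\geq\Delta_{\min}$ before $K_\epsilon$ is at most $(f(\bx_0)-f_{\rm low})/(c\,\alpha_Q\epsilon\Delta_{\min})=\bigO(\kappa_H(\kappa_d+1)\alpha_Q^{-2}\epsilon^{-2})$. The factor $(\kappa_d+1)^2$ in the statement arises once the decrease constants are tracked carefully; the extra $(\kappa_d+1)$ enters through the criticality/step-size relation. Standard radius-update bookkeeping then bounds the remaining classes by multiples of the counted ones. Because $\Delta_k$ increases by $\gamma_{\rm inc}$ on success and decreases by $\gamma_{\rm dec}$ otherwise, the unsuccessful iterations with large radius are bounded by a constant times the successful ones plus $\log(\Delta_0/\Delta_{\min})$. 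Similarly, the well-aligned small-radius iterations, which are all successful, are bounded by a constant times the unsuccessful small-radius ones. The ``suitable assumptions on the updating parameters'' are exactly the conditions making these linear combinations close. Altogether this gives $K_\epsilon\leq C_1 N_{\rm bad}(K_\epsilon)+C_2 \kappa_H(\kappa_d+1)^2\alpha_Q^{-2}\epsilon^{-2}$, where $N_{\rm bad}$ counts the iterations that are not well-aligned.

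\textbf{Step 3 (probabilistic step, the main obstacle).} By Assumption~\ref{ass_well_aligned}(a), the indicators of non-alignment are dominated by independent Bernoulli$(\delta_S)$ variables conditionally on the past. A Chernoff/Azuma bound therefore gives $\Prob{N_{\rm bad}(K)\geq(\delta_S+\tau)K}\leq e^{-2\tau^2 K}$. Choosing $K$ equal to a constant times $\kappa_H(\kappa_d+1)^2\alpha_Q^{-2}\epsilon^{-2}$ with $C_1(\delta_S+\tau)<1$ (this uses $\delta_S$ small enough, again part of the ``suitable assumptions''), the event $K_\epsilon>K$ contradicts the deterministic inequality except on the bad event. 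That event has probability at most $e^{-\bigO(K)}$, which is the claimed bound.

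The delicate point is the conditioning. The alignment events must be handled as a supermartingale, not as plain independent variables, because $\bx_k$ depends on $Q_0,\ldots,Q_{k-1}$. Assumption~\ref{ass_well_aligned}(a) is stated precisely to allow this. Part (b), $Q_{\max}$, enters only through the full-linearity constants absorbed into $\kappa_d$.
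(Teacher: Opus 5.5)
You follow the same route as the paper, whose proof reuses \cite[Corollary~1]{Cartis2023} and tracks the constants through $\epsilon_g(\epsilon)$ and $\Delta^*(\epsilon)$. But there is a genuine gap exactly where the refined $(\kappa_d+1)^2$ must come from, which is this theorem's only new content.

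\textbf{The gap.} The decrease claim in Step~1 is false. A successful iteration with $\Delta_k\geq\Delta_{\min}$ does not in general reduce $f$ by $c\,\alpha_Q\epsilon\,\Delta_{\min}$, because nothing there gives $\|\hat{\bg}_k\|\gtrsim\alpha_Q\epsilon$.
\begin{itemize}
\item For a non-aligned iteration, your safeguard $\|\hat{\bg}_k\|\gtrsim\Delta_k$ only gives a Cauchy decrease of order $\Delta_{\min}^2/\kappa_H$. This costs extra powers of $\kappa_H$ in the count.
\item Even for an aligned iteration, the bound $\|\hat{\bg}_k\|\geq\alpha_Q\epsilon-\kappaeg\Delta_k$ is vacuous once $\Delta_k$ is large.
\end{itemize}
So your intermediate count $\bigO(\kappa_H(\kappa_d+1)\alpha_Q^{-2}\epsilon^{-2})$ is unjustified. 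Saying the extra $(\kappa_d+1)$ ``enters through the criticality/step-size relation'' asserts the result rather than deriving it.

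\textbf{How to close it.}
\begin{enumerate}
\item Apply the decrease argument only to well-aligned, successful iterations with $\Delta_k\geq\Delta_{\min}$. The non-aligned successful iterations are absorbed by the radius bookkeeping into the $N_{\rm bad}$ term, as in your Step~3.
\item On such an iteration, the safeguard $\Delta_k\leq\mu\|\hat{\bg}_k\|$, alignment and $Q_k$-full linearity give $\|\hat{\bg}_k\|\geq\alpha_Q\epsilon-\kappaeg\mu\|\hat{\bg}_k\|$. Hence $\|\hat{\bg}_k\|\geq\epsilon_g\approx\alpha_Q\epsilon/(1+\mu\kappaeg)$, capped by the criticality threshold. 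This is the role of $\epsilon_g(\epsilon)$ in \cite{Cartis2023}. It supplies the first factor of $(\kappa_d+1)$, and keeping the $+1$ is precisely the refinement over the original $\kappa_d^2$.
\item The decrease is then at least a constant times $\epsilon_g\min(\Delta_{\min},\epsilon_g/\kappa_H)$. Your threshold $\Delta_{\min}\sim\alpha_Q\epsilon/(\kappa_H(\kappa_d+1))$ does guarantee success on aligned iterations, since $\kappa_H\geq 1$. With it, the decrease is of order $\alpha_Q^2\epsilon^2/(\kappa_H(\kappa_d+1)^2)$.
\end{enumerate}
This yields the $\bigO(\kappa_H(\kappa_d+1)^2\alpha_Q^{-2}\epsilon^{-2})$ term. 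Your remaining bookkeeping and the Chernoff step then go through.
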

\begin{proof}
    This is identical to \cite[Corollary 1]{Cartis2023}, except with $\kappa_d+1$ in place of $\kappa_d$.
    This more refined estimate of the impact of $\kappa_d$ on the complexity bound follows immediately from the definition of $\epsilon_g(\epsilon)$ and $\Delta^*(\epsilon)$ in \cite[Eqs.~(13) \& (17)]{Cartis2023}.
\end{proof}

In the original result, we provided the complexity in terms of $\bigO(\kappa_d^2)$ rather than $\bigO((\kappa_d+1)^2)$, as we wanted to demonstrate how the constant grows as $\kappa_d$ increases.
However, in \remref{rem_scaling} below, we will consider the case where $\kappa_d$ is small, for which this more refined estimate is relevant.

\subsection{Random Subspace Construction \& Evaluation Complexity}
In \cite[Section 2.6]{Cartis2023}, we show that \assref{ass_well_aligned} can be satisfied with $p=\Omega((1-\alpha_Q)^{-2} |\log\delta_S|)$ (i.e.~with $p=\bigO(1)$ independent of $n$) whenever $Q_k^T$ is a Johnson--Lindenstrauss transform, e.g.~$[Q_k^T]_{i,j} \sim N(0,1/p)$ i.i.d., or a hashing matrix\footnote{In \cite{Cartis2023} we gave $Q_{\max}=\sqrt{n}$ for hashing JLTs, but \cite[Lemma 3.7]{Cartis2022c} gives the improved bound $Q_{\max}=\sqrt{n/p}$.}, which both give $Q_{\max}=\bigO(\sqrt{n/p})$.
More examples of suitable JLTs can be found in \cite[Section 3]{Cartis2022c}. 

To get an overall evaluation complexity bound (i.e.~bound the total evaluations of the objective), we note that if the subspace model $\hat{m}_k$ is constructed using linear interpolation to points of distance $\Delta_k$ away from $\bx_k$ (in the subspace), then we get $\kappa_H=1$ and $\kappa_d = \bigO(\sqrt{p}\: Q_{\max}^2)$, where the factor $\sqrt{p}$ comes from standard linear interpolation bounds and the $Q_{\max}^2$ factor comes from the Lipschitz constant of the objective in the subspace, $\hat{\bs} \mapsto f(\bx_k+Q_k\hat{\bs})$.
Since the algorithm requires $\bigO(p)$ evaluations per iteration, the iteration bound in \thmref{thm_complexity} implies an evaluation complexity bound of $\bigO(p \kappa_H (\kappa_d+1)^2 \alpha_Q^{-2} \epsilon^{-2})=\bigO(p (\sqrt{p}\: Q_{\max}^2+1)^2 \alpha_Q^{-2} \epsilon^{-2})$.
If $Q_k^T$ is a Gaussian or hashing JLT, per \cite[Section 2.6]{Cartis2023}, for a fixed value of $\alpha_Q$ (and the associated value of $p$), the evaluation complexity is $\bigO(p n^2 \epsilon^{-2})$.

Alternatively, \cite[Lemma 6.7]{Scheinberg2026}, shows\footnote{We note that if $Q_k$ is Haar distributed then the alternative well-aligned definition \cite[Definition 6.2]{Scheinberg2026} is equivalent to our well-aligned condition with $\alpha_Q^2=1-\kappa_g^2$, via \cite[Lemma 6.3]{Scheinberg2026} and noting $\|Q_k \bv\|^2 = \|\bv\|^2$ for all $\bv\in\R^p$.} that taking $Q_k$ to be a Haar matrix (i.e.~uniformly distributed over matrices with orthonormal columns) satisfies \assref{ass_well_aligned} with $\alpha_Q = \bigO(\sqrt{p/n})$, $p=\bigO(1)$ and $Q_{\max}=1$.
Applying these results to Theorem~\ref{thm_complexity} gives the following evaluation complexity bound, which improves on using JLTs by a factor of $n$ (c.f.~\cite[Theorem 6.13]{Scheinberg2026}).

\begin{corollary} \label{cor_haar}
    Suppose the assumptions of Theorem~\ref{thm_complexity} hold.
    If $Q_k$ is generated from the Haar distribution, then \cite[Algorithm 1]{Cartis2023} achieves first-order optimality $\epsilon$ after $\bigO(n\epsilon^{-2})$ iterations or $\bigO(pn\epsilon^{-2})$ objective evaluations, with high probability, where $p=\bigO(1)$ can be chosen independently of $n$.
\end{corollary}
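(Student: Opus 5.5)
The plan is to substitute the Haar-ensemble parameters directly into \thmref{thm_complexity} and then convert the iteration bound into an evaluation bound using the per-iteration cost. First I will check that \assref{ass_well_aligned} holds. By \cite[Lemma 6.7]{Scheinberg2026}, with the equivalence of well-alignment definitions noted in the footnote, a Haar matrix $Q_k$ is well-aligned with probability at least $1-\delta_S$, with $\alpha_Q = \bigO(\sqrt{p/n})$, more precisely $\alpha_Q^2 \geq c\,p/n$ for a constant $c>0$. Here $p=\bigO(1)$ depends only on $\delta_S$ and not on $n$. Independence from $Q_0,\ldots,Q_{k-1}$ holds because each $Q_k$ is drawn freshly. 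Part (b) holds with $Q_{\max}=1$, since $Q_k$ has orthonormal columns and so $\|Q_k\|=1$.

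Next I will bound the constants. The subspace model is built by linear interpolation at distance $\Delta_k$, so $\kappa_H=1$ and $\kappa_d=\bigO(\sqrt{p}\,Q_{\max}^2)=\bigO(\sqrt{p})$. Because $p=\bigO(1)$, this gives $(\kappa_d+1)^2=\bigO(1)$. This is where the refined factor $(\kappa_d+1)$ in \thmref{thm_complexity} matters: the constant does not degenerate even though $\kappa_d$ is small.

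Substituting into \thmref{thm_complexity} gives
\begin{align}
K_\epsilon \leq \bigO\left(\kappa_H(\kappa_d+1)^2\alpha_Q^{-2}\epsilon^{-2}\right) = \bigO\left(\frac{n}{p}\,\epsilon^{-2}\right) = \bigO(n\epsilon^{-2}).
\end{align}
This holds with probability at least $1-e^{-\bigO(n\epsilon^{-2})}$, which is a high-probability statement.

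Finally, each iteration uses $\bigO(p)$ objective evaluations, namely $p+1$ interpolation points plus the trial point, with full resampling. Multiplying the iteration bound by this per-iteration cost gives $\bigO(pn\epsilon^{-2})$ evaluations on the same event. I could instead keep the sharper $n/p$ factor from $\alpha_Q^{-2}$, which would give $\bigO(n\epsilon^{-2})$ evaluations. The statement only claims the weaker $\bigO(pn\epsilon^{-2})$, which follows since $p\ge1$.

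The main obstacle is the transfer of the alignment result from \cite{Scheinberg2026}. I need to confirm that their Definition 6.2, with constant $\kappa_g$, translates to our $\alpha_Q$ with $\alpha_Q^2 = 1-\kappa_g^2$ scaling like $p/n$. I also need to confirm that the success probability $1-\delta_S$ can be made a fixed constant with $p$ independent of $n$. I would take both from \cite[Lemmas 6.3, 6.7]{Scheinberg2026}, using that $\|Q_k\bv\|=\|\bv\|$ makes the two projected-gradient norms coincide.
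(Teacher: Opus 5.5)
Your proposal is correct and follows essentially the same route as the paper: verify \assref{ass_well_aligned} for Haar matrices via \cite[Lemma 6.7]{Scheinberg2026} with $\alpha_Q^{-2}=\bigO(n/p)$ and $Q_{\max}=1$, use $\kappa_H=1$ and $\kappa_d=\bigO(\sqrt{p}\,Q_{\max}^2)$ from linear interpolation, substitute into \thmref{thm_complexity}, and multiply by the $\bigO(p)$ evaluations per iteration. One caveat concerns your aside: if you track $p$ explicitly, $(\kappa_d+1)^2=\bigO((\sqrt{p}+1)^2)=\bigO(p)$ cancels the $1/p$ in $\alpha_Q^{-2}$, so the iteration bound is $\bigO(n\epsilon^{-2})$ rather than $\bigO((n/p)\epsilon^{-2})$, and you cannot sharpen the evaluation count to $\bigO(n\epsilon^{-2})$ this way, since $\bigO(pn\epsilon^{-2})$ is exactly what the paper's formula $\bigO(p(\sqrt{p}\,Q_{\max}^2+1)^2\alpha_Q^{-2}\epsilon^{-2})$ yields.
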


\subsection{Improved Evaluation Complexity for JLTs}
This extra factor of $n$ from the JLT construction can be removed by simply rescaling the JLT, \emph{without changing the main complexity analysis} (i.e.~within the framework of \thmref{thm_complexity}).
This idea arises by noting that our evaluation complexity bound is more sensitive to large $Q_{\max}$ than to small $\alpha_Q$.

Suppose $S\in\R^{p\times n}$ is a Gaussian or hashing JLT.
We can then take our $Q_k$ to be a rescaling of $S$, say $Q_k = \beta S^T$ for some $\beta\in(0,1)$.
This gives us $p=\bigO(1)$ as before, but new values $\alpha_Q=\bigO(\beta)$ and $Q_{\max}=\bigO(\beta \sqrt{n/p})$. 
Applying Theorem~\ref{thm_complexity}, this gives a new evaluation complexity bound of $\bigO(p (\beta^2 n/\sqrt{p}+1)^2 \beta^{-2} \epsilon^{-2})$. 
By taking $\beta=1/\sqrt{n}$, we recover the same evaluation complexity as using Haar matrices (Corollary~\ref{cor_haar}).
We summarize this result below.

\begin{corollary}
    Suppose the assumptions of Theorem~\ref{thm_complexity} hold.
    If $Q_k$ is generated by $Q_k = S^T/\sqrt{n}$, where $S$ is a Gaussian or hashing JLT, then \cite[Algorithm 1]{Cartis2023} achieves first-order optimality $\epsilon$ after $\bigO(n\epsilon^{-2})$ iterations or $\bigO(pn\epsilon^{-2})$ objective evaluations, with high probability, where $p=\bigO(1)$ can be chosen independently of $n$.
\end{corollary}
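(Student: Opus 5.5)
The approach is to verify \assref{ass_well_aligned} for the rescaled matrices $Q_k = S^T/\sqrt{n}$ with explicit constants. We then substitute these constants, together with the resulting $\kappa_H$ and $\kappa_d$, into \thmref{thm_complexity}. The algorithm and its analysis are unchanged, so the work reduces to tracking how the rescaling $\beta = 1/\sqrt{n}$ moves $\alpha_Q$, $Q_{\max}$ and $\kappa_d$.

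\textbf{Step 1: Assumption 4(a).} Take $S$ to be a Gaussian or hashing JLT. By \cite[Section 2.6]{Cartis2023}, for $p = \Omega((1-\alpha)^{-2}|\log \delta_S|)$ we have, with probability at least $1-\delta_S$ and independently of past draws,
\[
\|S\grad f(\bx_k)\| \geq \alpha \|\grad f(\bx_k)\|
\]
for a fixed $\alpha\in(0,1)$, say $\alpha = 1/2$. This is a statement about a single fixed vector, which is legitimate because $\bx_k$ depends only on $Q_0,\ldots,Q_{k-1}$. Since $Q_k^T = S/\sqrt{n}$, it follows that $\|Q_k^T\grad f(\bx_k)\| \geq (\alpha/\sqrt{n})\|\grad f(\bx_k)\|$. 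Hence part (a) holds with $\alpha_Q = \alpha/\sqrt{n}$ and $p = \bigO(1)$.

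\textbf{Step 2: Assumption 4(b).} Using $\|S\| \leq \bigO(\sqrt{n/p})$ (from \cite[Section 2.6]{Cartis2023} and the hashing bound \cite[Lemma 3.7]{Cartis2022c}), we get $Q_{\max} = \bigO(1/\sqrt{p})$.

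This step is the main point of care. For hashing, $\|S\|\le\sqrt{n/p}$ holds deterministically, so (b) is fine. For Gaussian $S$, the norm bound holds only with high probability. I would handle this in one of two ways: either as in \cite{Cartis2023}, where this issue was already addressed, or by conditioning on the bound and absorbing its failure probability into $\delta_S$. Either way, $Q_{\max}$ is multiplied by $1/\sqrt{n}$ relative to the unscaled case.

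\textbf{Step 3: substitution into \thmref{thm_complexity}.} With linear interpolation as described above, $\kappa_H = 1$ and
\[
\kappa_d = \bigO(\sqrt{p}\,Q_{\max}^2) = \bigO(1/\sqrt{p}) = \bigO(1).
\]
Therefore $(\kappa_d+1)^2 = \bigO(1)$. This is exactly where the refined $(\kappa_d+1)$ form of \thmref{thm_complexity} matters, since $\kappa_d$ is now small rather than large. We also have $\alpha_Q^{-2} = \bigO(n)$.

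\thmref{thm_complexity} then gives $K_\epsilon \leq \bigO(n\epsilon^{-2})$ with probability at least $1 - e^{-\bigO(n\epsilon^{-2})}$. Multiplying by the $\bigO(p)$ evaluations per iteration gives $\bigO(pn\epsilon^{-2})$ objective evaluations. This matches Corollary~\ref{cor_haar}, with $p = \bigO(1)$ independent of $n$.
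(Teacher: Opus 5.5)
Your proposal is correct and follows essentially the paper's argument. Rescaling the JLT by $\beta = 1/\sqrt{n}$ gives $\alpha_Q = \Theta(1/\sqrt{n})$ and $Q_{\max} = \bigO(1/\sqrt{p})$, hence $\kappa_d = \bigO(\sqrt{p}\,Q_{\max}^2) = \bigO(1)$, so the refined $(\kappa_d+1)^2$ factor in Theorem~\ref{thm_complexity} is $\bigO(1)$ and $\alpha_Q^{-2} = \bigO(n)$ gives the bound (the paper does this computation for general $\beta$, obtaining $\bigO(p(\beta^2 n/\sqrt{p}+1)^2\beta^{-2}\epsilon^{-2})$, before setting $\beta = 1/\sqrt{n}$); your caveat that the Gaussian norm bound holds only with high probability is a point the paper does not revisit, as it simply inherits $Q_{\max} = \bigO(\sqrt{n/p})$ from \cite{Cartis2023}.
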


\begin{remark} \label{rem_scaling}
    We cannot improve the complexity bound by an arbitrary amount by taking an even smaller scaling, $\beta \ll 1/\sqrt{n}$, since the $(\kappa_d+1)^2$ term can only be made as small as $\bigO(1)$ by shrinking $\beta$, which $\beta=1/\sqrt{n}$ achieves.
    The original complexity bound from \cite[Corollary 1]{Cartis2023}, showing the dependency as $\bigO(\kappa_d^2)$, does not make this clear, as the summarized bound was assuming the large $\kappa_d$ limit.
\end{remark}

\begin{remark}
    If the model $\hat{m}_k$ is constructed using gradient information, $\hat{\bg}_k = Q_k^T \grad f(\bx_k)$ (i.e.~we have a derivative-based method) as in \cite{Cartis2022c}, then we have $\kappa_d=\bigO(Q_{\max}^2)$, and the scaled JLT approach gives a bound of $\bigO(n\epsilon^{-2})$ iterations, where each iteration requires an evaluation of a low-dimensional gradient $\hat{\bg}_k\in\R^p$ (e.g.~using $p+1$ evaluations of $f$ for finite differencing).
\end{remark}

\section{Conclusion}
Using a JLT ensemble for constructing $Q_k$ can, after scaling by $1/\sqrt{n}$, achieve the same evaluation complexity bound (with high probability) as the Haar ensemble from \cite[Section 6]{Scheinberg2026}, namely $\bigO(pn\epsilon^{-2})$ evaluations to achieve $\epsilon$-first order optimality.
Although this alternative scaling was not considered in \cite{Cartis2023}, that algorithm and analysis (without modification) permits this construction and improved complexity over the `unscaled' JLTs that were originally considered.

\addcontentsline{toc}{section}{References} 
\bibliographystyle{siam}
\bibliography{refs} 

@misc{Cartis2022c,
  title = {Randomised Subspace Methods for Non-Convex Optimization, with Applications to Nonlinear Least-Squares},
  author = {Cartis, Coralia and Fowkes, Jaroslav and Shao, Zhen},
  year = {2022},
  howpublished = {arXiv preprint arXiv:2211.09873},
  number = {arXiv:2211.09873},
  eprint = {2211.09873},
  primaryclass = {math},
  publisher = {arXiv},
}

@article{Cartis2023,
  title = {Scalable Subspace Methods for Derivative-Free Nonlinear Least-Squares Optimization},
  author = {Cartis, Coralia and Roberts, Lindon},
  year = {2023},
  journal = {Mathematical Programming},
  volume = {199},
  number = {1-2},
  pages = {461--524},
  issn = {0025-5610, 1436-4646},
  doi = {10.1007/s10107-022-01836-1},
}

@Book{Conn2009,
  Title                    = {Introduction to Derivative-Free Optimization},
  Author                   = {Conn, Andrew R. and Scheinberg, Katya and Vicente, Lu{\'{i}}s N.},
  Publisher                = {MPS/SIAM},
  Year                     = {2009},

  Address                  = {Philadelphia},
  Series                   = {MPS-SIAM Series on Optimization},
  Volume                   = {8},

  Doi                      = {10.1137/1.9780898718768},
  Url                      = {http://epubs.siam.org/doi/book/10.1137/1.9780898718768}
}

@article{Grapiglia2023,
  title = {Quadratic Regularization Methods with Finite-Difference Gradient Approximations},
  author = {Grapiglia, Geovani Nunes},
  year = {2023},
  journal = {Computational Optimization and Applications},
  volume = {85},
  number = {3},
  pages = {683--703},
  doi = {10.1007/s10589-022-00373-z},
}

@misc{Roberts2026,
  title = {Introduction to Model-Based Derivative-Free Optimization},
  author = {Roberts, Lindon},
  year = {2025},
  howpublished = {arXiv preprint arXiv:2510.04473},
  number = {arXiv:2510.04473},
  eprint = {2510.04473},
  primaryclass = {math.OC},
  publisher = {arXiv},
  doi = {10.48550/arXiv.2510.04473},
}

@article{Roberts2023,
  title = {Direct Search Based on Probabilistic Descent in Reduced Spaces},
  author = {Roberts, Lindon and Royer, Cl{\'e}ment W.},
  year = {2023},
  journal = {SIAM Journal on Optimization},
  volume = {33},
  number = {4},
  pages = {3057--3082},
  doi = {10.1137/22M1488569},
}

@article{Dodangeh2016,
  title = {On the Optimal Order of Worst Case Complexity of Direct Search},
  author = {Dodangeh, M. and Vicente, L. N. and Zhang, Z.},
  year = {2016},
  journal = {Optimization Letters},
  volume = {10},
  number = {4},
  pages = {699--708},
  issn = {1862-4472, 1862-4480},
  doi = {10.1007/s11590-015-0908-1},
}

@Article{Scheinberg2010,
  Title                    = {Self-Correcting Geometry in Model-Based Algorithms for Derivative-Free Unconstrained Optimization},
  Author                   = {Scheinberg, Katya and Toint, Philippe L.},
  Journal                  = {SIAM Journal on Optimization},
  Year                     = {2010},
  Number                   = {6},
  Pages                    = {3512--3532},
  Volume                   = {20},

  Doi                      = {10.1137/090748536},
  Url                      = {http://epubs.siam.org/doi/abs/10.1137/090748536}
}

@inproceedings{Scheinberg2026,
  title = {On Complexity of Model-Based Derivative-Free Methods},
  booktitle = {International Congress of Mathematicians 2026 Volume 7: Invited Lectures: Sections 15--20},
  author = {Scheinberg, K. and Chaudhry, Abraar},
  year = {2026},
  pages = {208--228},
  publisher = {Society for Industrial and Applied Mathematics},
  address = {USA},
  doi = {10.1137/25M1806016},
}

\end{document}